\numberwithin{equation}{section}
\newtheorem{theorem}{Theorem}[section]
\newtheorem{proposition}[theorem]{Proposition}
\newtheorem{lemma}[theorem]{Lemma}
\theoremstyle{definition}
\newtheorem{remark}[theorem]{Remark}
\begin{document}

\baselineskip=15pt

\title[Criterion for Lie algebroid connections]{A criterion for Lie algebroid connections
on a compact Riemann surface}

\author[I. Biswas]{Indranil Biswas}

\address{Department of Mathematics, Shiv Nadar University, NH91, Tehsil Dadri,
Greater Noida, Uttar Pradesh 201314, India}

\email{indranil.biswas@snu.edu.in, indranil29@gmail.com}

\author[P. Kumar]{Pradip Kumar}

\address{Department of Mathematics, Shiv Nadar University, NH91, Tehsil Dadri,
Greater Noida, Uttar Pradesh 201314, India}

\email{Pradip.Kumar@snu.edu.in}

\author[A. Singh]{Anoop Singh}

\address{Department of Mathematical Sciences, Indian Institute of Technology (BHU), Varanasi 221005, India}

\email{anoopsingh.mat@iitbhu.ac.in}

\subjclass[2010]{14H60, 53B15, 70G45}

\keywords{Lie algebroid, connection, stable bundle.}

\date{}

\begin{abstract}
Let $X$ be a compact connected Riemann surface and $(V,\, \phi)$ a holomorphic Lie algebroid on $X$ such that the
holomorphic vector bundle $V$ is stable. We give a necessary and sufficient condition on holomorphic
vector bundles $E$ on $X$ to admit a Lie algebroid connection.
\end{abstract}

\maketitle

\section{Introduction}

A Lie algebroid on a complex manifold $M$ is a locally free coherent analytic sheaf $V$ on $M$ equipped with a Lie algebra
structure
$$
[-,\, -] \,\,:\,\, V\otimes_{\mathbb C} V \,\, \longrightarrow\,\, V
$$
together with a holomorphic homomorphism of vector bundles $\phi\, :\, V\, \longrightarrow\, TM$,
which is called the \textit{anchor map} of the Lie algebroid, that satisfies the condition
that $[fs,\, t]\,=\, f[s,\, t]-\phi(t)(f)s$ for all locally defined holomorphic sections $s,\, t$ of $V$ and
all locally defined holomorphic functions $f$ on $M$.
Given a holomorphic vector bundle, or more generally a holomorphic principal bundle, on $M$,
its Atiyah bundle (see \cite{At}) is an example of a Lie algebroid.

Connection on a vector bundle is a fundamental concept in differential geometry. An extension of the classical notion of connection 
on a vector or principal bundle over a differentiable manifold $Y$ can be naturally achieved by substituting the tangent bundle of 
$Y$ with a Lie algebroid over $Y$. These generalizations are called Lie algebroid connections.

A Lie algebroid connection on a holomorphic vector bundle $E$ on $M$ is a first order holomorphic differential operator
$$
D\,\,:\,\, E\,\,\longrightarrow\, \, E\otimes V^*
$$
such that $D(fs) \,=\, fD(s) + s\otimes \phi^*(df)$
for all locally defined holomorphic sections $s$ of $E$ and all locally defined holomorphic
functions $f$ on $M$, where $\phi^*\,:\, T^*M \, \longrightarrow\, V^*$ is the dual of the anchor map $\phi$.
When $V\,=\, TM$ and $\phi\,=\,
{\rm Id}_{TM}$, a Lie algebroid connection (respectively, holomorphic Lie algebroid connection)
is an usual connection (respectively, holomorphic connection).
Many of the classical results in the theory of connections, including the Chern--Weil theory of 
characteristic classes, extend naturally to the context of Lie algebroid connections in the smooth 
($\mathcal{C}^\infty$) category. For an introduction to Lie algebroids and an in-depth treatment of Lie 
algebroid connections in the smooth ($\mathcal{C}^\infty$) category see \cite{KM}, \cite{RF}, \cite{ELW}. 
Holomorphic Lie algebroids over a complex manifold have been discussed in \cite{SC}, \cite{AO}, \cite{To}.

Analogous to the classical setup, Lie algebroid connections always exist on a smooth vector bundle over a differentiable manifold; 
however, this is not the case in the holomorphic set-up. Our aim here is to establish a criterion for the existence of 
holomorphic Lie algebroid connections on a holomorphic vector bundle over a compact connected Riemann surface.

A holomorphic vector bundle is called indecomposable if it is not a direct sum of two vector 
bundles of positive ranks. If we express a holomorphic vector bundle $E$ on a compact 
connected complex manifold, in two different ways, as direct sum of indecomposable vector 
bundles $$ \bigoplus_{i=1}^m A_i\,=\, E\,=\, \bigoplus_{j=1}^n B_j, $$ then $m\,=\, n$, and 
there is a permutation $\sigma$ of $\{1,\, \cdots ,\, m\}$ such that $A_i$ is isomorphic to 
$B_{\sigma(i)}$ \cite[p.~315, Theorem 2(ii)]{At1}. The isomorphism classes of vector bundles 
in $\{A_1,\, \cdots,\, A_m\}$ are called the indecomposable components of $E$. Let $X$ be a 
compact connected Riemann surface. A holomorphic vector bundle $E$ over $X$ admits a 
holomorphic connection if and only if the degree of each indecomposable component of $E$ is 
zero \cite{At}, \cite{We}; this is known as the {\it Atiyah--Weil criterion}. In particular, 
an indecomposable holomorphic vector bundle on $X$ admits a holomorphic connection if and 
only if its degree is zero. It is a natural question to look for a similar criterion for the 
existence of holomorphic Lie algebroid connections on holomorphic vector bundles over $X$.

We prove the following (see Proposition \ref{prop1}):

\begin{proposition}\label{prop0.1}
Let $(V,\, \phi)$ be a Lie algebroid on a compact connected Riemann surface $X$ such that
\begin{itemize}
\item ${\rm rank}(V) \, \geq\, 2$, and

\item the vector bundle $V$ is stable.
\end{itemize}
Then any holomorphic vector bundle $E$ on $X$ admits a Lie algebroid connection.
\end{proposition}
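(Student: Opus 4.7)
The plan is to identify the obstruction to the existence of a $V$-connection on $E$ as a single cohomology class and show it vanishes. A standard patching argument (gluing local $V$-connections coming from trivializations of $E$) shows that the obstruction is the Lie algebroid Atiyah class $\mathrm{at}_V(E)\in H^1(X,\mathrm{End}(E)\otimes V^*)$, and by naturality it equals the image of the classical Atiyah class $\mathrm{at}(E)\in H^1(X,\mathrm{End}(E)\otimes K_X)$ under the cohomology map induced by the dual anchor $\phi^*:K_X\to V^*$. If $\phi=0$ the assertion is immediate, since the zero operator $D=0:E\to E\otimes V^*$ is itself a $V$-connection (the Leibniz term $s\otimes\phi^*(df)$ vanishes). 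So assume $\phi\neq 0$.

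By Serre duality, using that $\mathrm{End}(E)$ is self-dual via trace, $H^1(X,\mathrm{End}(E)\otimes V^*)$ is canonically dual to $H^0(X,\mathrm{End}(E)\otimes V\otimes K_X)$, so $\mathrm{at}_V(E)=0$ is equivalent to the vanishing of $\langle\mathrm{at}(E),\phi_*(\omega)\rangle$ for every $\omega\in H^0(X,\mathrm{End}(E)\otimes V\otimes K_X)$, where $\phi_*$ is induced by $\phi\otimes\mathrm{id}_{K_X}:V\otimes K_X\to\mathcal O_X$. Applying Krull--Schmidt to write $E=\bigoplus_i E_i$ with $E_i$ indecomposable, $\mathrm{at}(E)$ is block-diagonal with diagonal entries $\mathrm{at}(E_i)$, so the pairing splits as $\sum_i\langle\mathrm{at}(E_i),\phi_*(\omega_{ii})\rangle$ and it suffices to treat $E$ indecomposable.

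For indecomposable $E$, I will show $\phi_*(\omega)\in H^0(\mathrm{End}(E))$ is traceless, so that it lies in the nilpotent maximal ideal of the local Artinian ring $H^0(\mathrm{End}(E))$ (whose residue map is $A\mapsto\mathrm{tr}(A)/\mathrm{rank}(E)$). Naturality gives $\mathrm{tr}(\phi_*(\omega))=\phi_*(\mathrm{tr}(\omega))$, so it suffices to check that $\phi_*:H^0(X,V\otimes K_X)\to\mathbb C$ is the zero map. For $\sigma\in\mathrm{Hom}(TX,V)\cong H^0(X,V\otimes K_X)$, the composition $\phi\circ\sigma:TX\to TX$ is multiplication by some scalar $c\in\mathbb C$. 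If $c\neq 0$ then $c^{-1}\sigma$ is a splitting of the anchor, giving $V\cong TX\oplus\ker(\phi)$ with both summands non-zero (since $\mathrm{rank}(V)\geq 2$), contradicting the indecomposability of the stable bundle $V$. If instead the image of $\phi$ is a proper subsheaf $TX(-Z)$ with $Z>0$, then $c$ must vanish because $c\cdot TX$ cannot lie in $TX(-Z)$. Either way $\phi_*$ is zero and $\phi_*(\omega)$ is traceless.

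The final step, and the main technical point, is to show that for indecomposable $E$ and nilpotent $N\in H^0(\mathrm{End}(E))$, the pairing $\langle\mathrm{at}(E),N\rangle\in H^1(X,K_X)\cong\mathbb C$ vanishes. I will use the holomorphic filtration $0\subset\ker N\subset\ker N^2\subset\cdots\subset E$ (each term is locally free on the smooth curve) and choose, via partition of unity, a $C^\infty$ connection $\nabla$ on $E$ with $\nabla^{0,1}=\bar\partial_E$ that preserves this filtration. In a $C^\infty$ frame adapted to a splitting $E\cong\bigoplus_j\ker N^j/\ker N^{j-1}$, both $\bar\partial_E$ and the $(1,0)$-part of $\nabla$ are block upper-triangular, hence so is the $(1,1)$-curvature $F^{1,1}$ representing $\mathrm{at}(E)$; meanwhile $N$ is strictly upper-triangular (it maps $\ker N^j$ into $\ker N^{j-1}$). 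The product $N\cdot F^{1,1}$ is therefore strictly upper-triangular, its diagonal vanishes pointwise, and $\mathrm{tr}(N\cdot F^{1,1})=0$ as a global $(1,1)$-form; hence $\langle\mathrm{at}(E),N\rangle=0$. Combining with the previous step, $\langle\mathrm{at}(E_i),\phi_*(\omega_{ii})\rangle=0$ for each $i$, so $\mathrm{at}_V(E)=0$ and $E$ admits a Lie algebroid connection.
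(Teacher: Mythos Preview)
Your proof is correct, and it follows a genuinely different route from the paper's.

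The paper splits first on $\mu(V)$ versus $\deg(TX)=2(1-g)$: when $\mu(V)\ge 2(1-g)$ a short slope argument forces $\phi=0$; when $\mu(V)<2(1-g)$ the paper works with the Serre-dual description of the obstruction and argues via semistability. If $E$ is semistable, then $E\otimes V\otimes K_X$ is semistable of strictly smaller slope, so $H^0(\mathrm{Hom}(E,E\otimes V\otimes K_X))=0$ outright. If $E$ is not semistable, the paper uses the Harder--Narasimhan filtration $\{F_i\}$ and shows, again by slope comparison, that every $\theta\in H^0(\mathrm{End}(E)\otimes V\otimes K_X)$ satisfies $\theta(F_i)\subset F_{i-1}\otimes V\otimes K_X$; the obstruction class lifts to the filtration-preserving Atiyah sequence, and the trace pairing with a strictly filtration-decreasing $\theta$ vanishes.

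Your argument replaces the HN filtration and semistability machinery with the Krull--Schmidt decomposition and the local structure of $H^0(\mathrm{End}(E_i))$ for each indecomposable summand $E_i$. The crucial step is your observation that $\phi_*\colon H^0(X,V\otimes K_X)\to H^0(X,\mathcal O_X)$ is zero whenever $V$ is stable of rank $\ge 2$, forcing $\phi_*(\omega)$ to be traceless and hence nilpotent in the local ring $H^0(\mathrm{End}(E_i))$; you then run an Atiyah-style filtration argument with the kernel filtration of this nilpotent $N$ and a $C^\infty$ connection adapted to it. This is closer in spirit to Atiyah's original proof of the Atiyah--Weil criterion and avoids any appeal to tensor products of semistable bundles or to the Harder--Narasimhan filtration. (In fact, once one knows---as the paper shows---that $\phi\neq 0$ forces $\mu(V)<2(1-g)$, stability of $V\otimes K_X$ with negative slope gives $H^0(V\otimes K_X)=0$, so your splitting argument in step~5, while correct, is handling a situation that turns out to be vacuous; your proof does not need this observation, however.) The paper's approach, by contrast, gives the stronger intermediate statement that every $\theta$ is HN-nilpotent, and in the semistable case yields the obstruction space itself to be zero.
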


\begin{remark}\label{rem-i}
Note that in the special case where the anchor map $\phi$ is identically zero, a Lie algebroid connection
on $E$ is simply an ${\mathcal O}_X$--linear homomorphism $E\, \longrightarrow\, E\otimes V^*$. It should be
clarified that this homomorphism is allowed to be the zero map. Consequently, when $\phi\,=\, 0$, any holomorphic
vector bundle $E$ has a Lie algebroid connection given by the zero homomorphism from $E$ to $E\otimes V^*$.
\end{remark}

Next, for the case of ${\rm rank}(V)\,=\, 1$, we prove the following (see Lemma \ref{lem2}):

\begin{lemma}\label{lem0.2}
Let $(V,\, \phi)$ be a Lie algebroid on a compact connected Riemann surface $X$ such
that ${\rm rank}(V)\,=\, 1$. Assume that one of the following two holds:
\begin{enumerate}
\item $V\,\not=\, TX$;

\item if $V\,=\, TX$, then $\phi\,=\, 0$.
\end{enumerate}
Then any holomorphic vector bundle $E$ on $X$ admits a Lie algebroid connection.
\end{lemma}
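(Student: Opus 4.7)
The plan is to reduce the statement to the vanishing of a single cohomological obstruction and then to kill it by a combination of Serre duality and the Hirzebruch--Riemann--Roch formula. First, dispose of the case $\phi=0$: whether $V=TX$ (condition~(2)) or $V\neq TX$ with vanishing anchor (a subcase of~(1)), Remark~\ref{rem-i} directly yields a Lie algebroid connection via the zero homomorphism $D=0\colon E\to E\otimes V^*$. So I may assume $\phi\neq 0$ and $V\neq TX$. Since $V$ and $TX$ are line bundles and $V\not\cong TX$, the nonzero morphism $\phi$ is injective with cokernel $\mathcal{O}_S$ for some nonzero effective divisor $S$; dually, $V^*\cong T^*X(S)$.

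The standard Atiyah-type argument for Lie algebroid connections identifies the obstruction to the existence of a $V$-connection on $E$ with the class $\phi^*(\operatorname{at}(E))\in H^1(X,\operatorname{End}(E)\otimes V^*)$, where $\operatorname{at}(E)\in H^1(X,\operatorname{End}(E)\otimes T^*X)$ is the Atiyah class of $E$. Using $V\otimes K_X\cong\mathcal{O}(-S)$, Serre duality identifies this $H^1$ group as the dual of $H^0(X,\operatorname{End}(E)(-S))$, and exhibits $\phi^*(\operatorname{at}(E))$ as the functional $A\mapsto\operatorname{tr}(A\cdot\operatorname{at}(E))\in H^1(X,K_X)\cong\mathbb{C}$, where $A$ is regarded as an element of $H^0(X,\operatorname{End}(E))$ via the natural inclusion $\operatorname{End}(E)(-S)\hookrightarrow\operatorname{End}(E)$. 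Hence $\phi^*(\operatorname{at}(E))=0$ if and only if this Serre pairing vanishes for every $A\in H^0(X,\operatorname{End}(E)(-S))$.

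To establish this pairing condition, decompose $E=\bigoplus_i E_i$ into indecomposable summands. Since $\operatorname{at}(E)=\bigoplus_i\operatorname{at}(E_i)$ is block diagonal in $H^1(\operatorname{End}(E)\otimes T^*X)$, only the diagonal blocks $A_{ii}$ of $A$ contribute to the Serre pairing, reducing it to $\sum_i\operatorname{tr}(A_{ii}\cdot\operatorname{at}(E_i))$. Each $A_{ii}\in H^0(\operatorname{End}(E_i)(-S))$ is a holomorphic endomorphism of $E_i$ vanishing on the nonempty divisor $S$. Because $E_i$ is indecomposable, the algebra $H^0(\operatorname{End}(E_i))$ is local, so $A_{ii}=c\cdot\operatorname{id}+N$ with $c\in\mathbb{C}$ and $N$ nilpotent; vanishing of $A_{ii}$ at any point of $S$ forces $c=0$, so $A_{ii}$ itself is nilpotent.

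The main obstacle is the remaining step: showing $\operatorname{tr}(N\cdot\operatorname{at}(F))=0$ for every indecomposable holomorphic vector bundle $F$ on $X$ and every nilpotent $N\in H^0(\operatorname{End}(F))$. I would deduce this from the Hirzebruch--Riemann--Roch formula for twisted Euler characteristics on a compact Riemann surface of genus $g$: for any holomorphic endomorphism $N$ of a holomorphic vector bundle $F$,
\[
\sum_{j=0}^{1}(-1)^{j}\operatorname{tr}\bigl(N\,\big|\,H^{j}(X,F)\bigr)\;=\;\operatorname{tr}(N)\,(1-g)\;+\;\frac{1}{2\pi i}\,\operatorname{tr}\bigl(N\cdot\operatorname{at}(F)\bigr)
\]
in a standard normalization of the Atiyah class. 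If $N$ is nilpotent in the local algebra $H^0(\operatorname{End}(F))$, then $N^k=0$ as a morphism $F\to F$, so $N$ acts nilpotently on each cohomology group $H^{j}(X,F)$ and has pointwise trace zero on the fibers; hence both the Lefschetz sum and $\operatorname{tr}(N)\in H^0(X,\mathcal{O}_X)=\mathbb{C}$ vanish, forcing $\operatorname{tr}(N\cdot\operatorname{at}(F))=0$. This yields $\phi^*(\operatorname{at}(E))=0$ and produces the desired Lie algebroid connection on $E$.
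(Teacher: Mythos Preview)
Your argument is correct, but the route diverges from the paper's at the final step. Both proofs dispose of $\phi=0$ trivially and, for $\phi\neq 0$, identify the obstruction via Serre duality with a functional on $H^0(X,\operatorname{End}(E)(-S))$, where $S=\operatorname{Div}(\phi)$ is the nonzero effective divisor of zeros of $\phi$. For nilpotency of a test section $\theta$, the paper simply notes that each $\operatorname{tr}(\theta^j)$ is a holomorphic function on $X$ vanishing on $S$, hence identically zero, so $\theta$ is nilpotent---no decomposition of $E$ is needed; you instead pass to indecomposable summands and invoke locality of their endomorphism algebras. For the key vanishing $\operatorname{tr}(\theta\cdot\operatorname{at}(E))=0$, the paper uses the kernel filtration $W_k=\ker(\theta^k)$: since the jet (and hence the $V$-Atiyah) sequence respects this filtration, the obstruction class lifts to the filtration-preserving endomorphism bundle, and the trace pairing of a filtration-preserving class against a strictly triangular endomorphism is zero. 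You replace this step by the endomorphism-twisted Hirzebruch--Riemann--Roch identity. That identity is true, but be aware that its most elementary proof on a curve is exactly the filtration argument the paper carries out (reduce by additivity along $N$-stable short exact sequences to line bundles, where it is ordinary Riemann--Roch); a reader may therefore view your final step as importing a theorem whose relevant content coincides with the paper's direct computation. What your route buys is a clean conceptual link to Riemann--Roch and to Atiyah's original mechanism behind the Atiyah--Weil criterion (nilpotent endomorphisms pair trivially with the Atiyah class); what the paper's route buys is a fully self-contained argument using only Serre duality and the linear algebra of filtrations.
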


Combining Proposition \ref{prop0.1} and Lemma \ref{lem0.2} with the Atiyah--Weil criterion
we obtain the following (see Theorem \ref{thm1}):

\begin{theorem}\label{thm0.1}
Let $(V,\, \phi)$ be a Lie algebroid on a compact connected Riemann surface $X$ such that the vector
bundle $V$ is stable.
\begin{enumerate}
\item If $\phi$ is not an isomorphism, then any holomorphic vector bundle $E$ on $X$ admits
a Lie algebroid connection.

\item If $\phi$ is an isomorphism, then a holomorphic vector bundle $E$ on $X$ admits
a Lie algebroid connection if and only if the degree of each indecomposable component of $E$ is zero.
\end{enumerate}
\end{theorem}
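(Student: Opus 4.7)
The plan is to deduce Theorem \ref{thm0.1} from Proposition \ref{prop0.1}, Lemma \ref{lem0.2}, and the Atiyah--Weil criterion by a careful case analysis on the anchor map $\phi$, together with one short bijection argument for the case that $\phi$ is an isomorphism.

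For part (1), I would split according to $\mathrm{rank}(V)$. If $\mathrm{rank}(V)\geq 2$, Proposition \ref{prop0.1} applies directly and gives a Lie algebroid connection on every holomorphic $E$. If $\mathrm{rank}(V)=1$, I would verify the hypotheses of Lemma \ref{lem0.2}. Either $V\not\cong TX$, in which case item (1) of the lemma applies, or $V=TX$; in the latter case, the anchor $\phi$ is an endomorphism of the line bundle $TX$ and hence a global section of $\mathcal{H}om(TX,TX)\cong \mathcal{O}_X$, which by compactness and connectedness of $X$ is a constant. Thus if $\phi$ is not an isomorphism this constant vanishes, $\phi=0$, and item (2) of the lemma applies. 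This closes part (1).

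For part (2), $\phi\colon V\to TX$ is an isomorphism, so in particular $\mathrm{rank}(V)=1$, and the dual $\phi^{*}\colon \Omega^1_X\to V^{*}$ is also an isomorphism. The key observation is then that post-composing a Lie algebroid connection with $\mathrm{id}_E\otimes (\phi^{*})^{-1}$ defines a map
\[
D \;\longmapsto\; \widetilde{D}\;:=\;(\mathrm{id}_E\otimes (\phi^{*})^{-1})\circ D\colon E\longrightarrow E\otimes \Omega^1_X
\]
which converts the Leibniz rule $D(fs)=fD(s)+s\otimes\phi^{*}(df)$ into the usual Leibniz rule $\widetilde{D}(fs)=f\widetilde{D}(s)+s\otimes df$. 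This produces a bijection between Lie algebroid connections on $E$ and holomorphic (ordinary) connections on $E$, and its inverse is given by the analogous composition with $\mathrm{id}_E\otimes \phi^{*}$. Once this bijection is in hand, the Atiyah--Weil criterion recalled in the introduction gives the stated characterization in terms of degrees of the indecomposable components of $E$.

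There is really no single hard step: the whole theorem is a combinatorial wrap-up of results already established in the excerpt. The only place that needs a small independent argument is the observation, used in part (1), that on a compact connected Riemann surface a non-isomorphism endomorphism of the line bundle $TX$ must be zero; and in part (2) the natural identification between Lie algebroid connections and holomorphic connections when $\phi$ is an isomorphism. Both are routine, so I expect the proof to be essentially a short organized case analysis without any substantial new computation.
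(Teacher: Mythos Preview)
Your proposal is correct and follows essentially the same approach as the paper: split part (1) by $\mathrm{rank}(V)$ and invoke Proposition~\ref{prop0.1} or Lemma~\ref{lem0.2}, and reduce part (2) to the Atiyah--Weil criterion. The paper is terser --- it simply cites Lemma~\ref{lem0.2} in the rank-one case without spelling out why $\phi$ not an isomorphism forces the lemma's hypotheses, and in part (2) it just asserts that a Lie algebroid connection is an ordinary holomorphic connection when $\phi$ is an isomorphism --- whereas you make both of these points explicit, but the logical structure is identical.
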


The curvature of a Lie algebroid connection $D$ on $E$ is a holomorphic section of
$\text{End}(E)\otimes \bigwedge\nolimits^2 V^*$. The Lie algebroid connection $D$ is called
integrable if its curvature vanishes identically. Note that $D$ is integrable if
${\rm rank}(V)\,=\, 1$.

A natural question is find criteria for the existence of integrable Lie algebroid connections
on a given holomorphic vector bundle.

\section{Holomorphic Lie algebroid connections}\label{HLC}

Let $X$ be a compact connected Riemann surface. The holomorphic cotangent and tangent bundles
of $X$ will be denoted by $K_X$ and $TX$ respectively. A Lie algebra structure on a holomorphic vector
bundle $V$ 
is a $\mathbb C$--bilinear pairing given by a sheaf homomorphism
$$
[-,\, -] \,\,:\,\, V\otimes_{\mathbb C} V \,\, \longrightarrow\,\, V
$$
such that $[s,\, t]\,=\, -[t,\, s]$ and $[[s,\, t],\, u]+[[t,\, u],\, s]+[[u,\, s],\, t]\,=\, 0$
for all locally defined holomorphic sections $s,\, t,\, u$ of $V$. The Lie bracket operation on $TX$ gives
the structure of a Lie algebra on it. A Lie algebroid on $X$ is a pair $(V,\, \phi)$, where
\begin{enumerate}
\item $V$ is a holomorphic vector bundle on $X$ equipped with the structure of a Lie algebra,

\item $\phi\, :\, V\, \longrightarrow\, TX$ is an ${\mathcal O}_X$--linear homomorphism such that
\begin{equation}\label{se}
\phi([s,\, t])\,=\, [\phi(s),\, \phi(t)],
\end{equation}
for all locally defined holomorphic sections $s,\, t$ of $V$, and

\item $[fs,\, t]\,=\, f[s,\, t]-\phi(t)(f)s$ for all locally defined holomorphic sections $s,\, t$ of $V$ and
all locally defined holomorphic functions $f$ on $X$.
\end{enumerate}
The above homomorphism $\phi$ is called the \textit{anchor map} of the Lie algebroid.

\begin{remark}\label{rem-s}
The second condition \eqref{se} in the above definition actually follows from the first and the third conditions.
In fact, this is proved by a straightforward computation.
\end{remark}

Let $(V,\, \phi)$ be a Lie algebroid on $X$. We have the dual homomorphism
\begin{equation}\label{e1}
\phi^*\,:\, K_X\, \longrightarrow\, V^*
\end{equation}
of $\phi$. A Lie algebroid connection on a holomorphic vector bundle
$E$ on $X$ is a first order holomorphic differential operator
$$
D\,\,:\,\, E\,\,\longrightarrow\, \, E\otimes V^*
$$
such that
\begin{equation}\label{e-4}
D(fs) \,=\, fD(s) + s\otimes \phi^*(df)
\end{equation}
for all locally defined holomorphic sections $s$ of $E$ and all locally defined holomorphic
functions $f$ on $X$, where $\phi^*$ is the homomorphism in \eqref{e1}.

Consider the following short exact sequence of jet bundles for $E$:
\begin{equation}\label{e0}
0\, 
\longrightarrow\, E\otimes K_X \, \stackrel{\alpha}{\longrightarrow}\, J^1(E) \, 
\stackrel{\beta}{\longrightarrow}\, J^0(E)\,=\,E \, \longrightarrow\, 0.
\end{equation}
We recall that the fiber of $J^1(E)$ over any $x\, \in\, X$ is the space of
all holomorphic sections of $E$ over the first order infinitesimal neighborhood
of $x$. The homomorphism $\beta$ in \eqref{e0} is the restriction map that sends
a section of $E$ over the first order infinitesimal neighborhood of $x$ to the
evaluation of the section at $x$. We have the homomorphism 
\begin{equation}\label{ed}
\Delta\, :\, E\otimes K_X\, \longrightarrow\, (E\otimes V^*)\oplus J^1(E)
\end{equation}
that sends any $e\otimes w\, \in\, (E\otimes K_X)_x$, where $x\, \in\, X$, $e\, \in\, E_x$ and $w\,\in\,
(K_X)_x$, to
$$
(-e\otimes\phi^*_x(w),\,\, \alpha_x(e\otimes w))\,\,\in\,\, (E\otimes V^*)_x\oplus J^1(E)_x,
$$
where $\phi^*$ and $\alpha$ are the homomorphisms constructed in \eqref{e1} and \eqref{e0} respectively. Note
that $\Delta$ in \eqref{ed} is fiberwise injective because $\alpha$ is so. Now define the quotient
\begin{equation}\label{e10}
J^1_V(E) \,\, :=\,\, ((E\otimes V^*)\oplus J^1(E))/\Delta(E\otimes K_X).
\end{equation}
Consider the composition of maps
$$
(E\otimes V^*)\oplus J^1(E)\, \longrightarrow\, J^1(E) \, \stackrel{\beta}{\longrightarrow}\, E,
$$
where $\beta$ is the homomorphism in \eqref{e0} and the first map is the natural projection. This
homomorphism evidently vanishes on $\Delta(E\otimes K_X)\, \subset\, (E\otimes V^*)\oplus J^1(E)$ (see
\eqref{e10}), and hence it produces a surjective homomorphism
\begin{equation}\label{eb}
\beta'\,\,:\,\, J^1_V(E)\,\,=\,\,((E\otimes V^*)\oplus J^1(E))/\Delta(E\otimes K_X)
\, \,\longrightarrow\,\, E.
\end{equation}
Let
\begin{equation}\label{ea}
\alpha'\,\,:\,\, E\otimes V^*\,\, \longrightarrow\,\, J^1_V(E)
\end{equation}
be the composition of maps
$$
E\otimes V^*\, \hookrightarrow\, (E\otimes V^*)\oplus J^1(E)\, \longrightarrow\, ((E\otimes V^*)\oplus 
J^1(E))/\Delta(E\otimes K_X)\,=\, J^1_V(E),
$$
where $(E\otimes V^*)\oplus J^1(E)\, \longrightarrow\, J^1_V(E)$ is the quotient
map (see \eqref{e10}). Consider the composition of maps
\begin{equation}\label{ea2}
\beta'\circ\alpha'\, \, :\,\, E\otimes V^*\,\, \longrightarrow\,\, E,
\end{equation}
where $\alpha'$ and $\beta'$ are constructed in \eqref{ea} and \eqref{eb} respectively.
Note that the following composition of homomorphisms vanishes identically:
$$
E\otimes V^*\, \hookrightarrow \, (E\otimes V^*)\oplus J^1(E) \, \longrightarrow\,
J^1(E) \, \longrightarrow\, E,
$$
where $E\otimes V^*\, \hookrightarrow \, (E\otimes V^*)\oplus J^1(E)$ is the natural
inclusion map and $(E\otimes V^*)\oplus J^1(E) \, \longrightarrow\,
J^1(E)$ is the natural projection. Consequently, the map $\beta'\circ\alpha'$ in \eqref{ea2}
vanishes identically. Next it is straightforward to check that the image of $\alpha'$ in
\eqref{ea} is precisely the kernel of $\beta'$ in \eqref{eb}. Consequently, we have
a short exact sequence
\begin{equation}\label{e-1}
0\, \longrightarrow\, E\otimes V^* \, \stackrel{\alpha'}{\longrightarrow}\, 
J^1_V(E) \, \stackrel{\beta'}{\longrightarrow}\, E \, \longrightarrow\, 0
\end{equation}
of holomorphic vector bundles on $X$.

We will show that giving a Lie algebroid connection on $E$ is equivalent to giving a
holomorphic splitting of the short exact sequence in \eqref{e-1}.

First, let $D\, :\, E\, \longrightarrow\, E\otimes V^*$ be a Lie algebroid connection on $E$.
Since $D$ is a holomorphic differential operator of order $1$ from $E$ to $E\otimes V^*$, it
is given by a holomorphic section
$$
\Psi_D\,\, \in\,\, H^0(X,\, E\otimes V^* \otimes J^1(E)^*)\,\,=\,\,
H^0(X,\, \text{Hom}(J^1(E),\, E\otimes V^*)).
$$
Let $\widetilde{\Psi}_D\, :\, (E\otimes V^*)\oplus J^1(E)\, \longrightarrow\, E\otimes V^*$
be the homomorphism defined by $(v,\, w) \, \longmapsto\, v+ \Psi_D (w)$. The composition
of homomorphisms
$$
E\otimes K_X \, \stackrel{\Delta}{\longrightarrow}\, (E\otimes V^*)\oplus J^1(E)\,
\xrightarrow{\,\,\,\widetilde{\Psi}_D\,\,\,}\, E\otimes V^*,
$$
where $\Delta$ is defined in \eqref{ed}, vanishes identically. Consequently, this
composition of homomorphisms produces a homomorphism
$$
J^1_V(E) \,=\, ((E\otimes V^*)\oplus J^1(E))/\Delta(E\otimes K_X)
\,\longrightarrow\, E\otimes V^*
$$
(see \eqref{e10}); let $\widetilde{\Psi}'_D\, :\, J^1_V(E) \,\longrightarrow\, E\otimes V^*$
denote this homomorphism. Since $D$ satisfies \eqref{e-4}, it follows that
$\widetilde{\Psi}'_D$ has the following property: The composition of maps $\widetilde{\Psi}'_D
\circ\alpha'$, where $\alpha'$ is constructed in \eqref{e-1}, coincides with
${\rm Id}_{E\otimes V^*}$. Therefore, $\widetilde{\Psi}'_D$ produces a holomorphic splitting
of the short exact sequence in \eqref{e-1}. Thus, from any Lie algebroid connection on $E$
we have a holomorphic splitting of the short exact sequence in \eqref{e-1}.

To prove the converse, let
$$
\delta\,\,:\,\, J^1_V(E)\,\, \longrightarrow\, \, E\otimes V^*
$$
be a holomorphic homomorphism giving a splitting
of the short exact sequence in \eqref{e-1}, meaning
\begin{equation}\label{ed2}
\delta\circ\alpha'\,=\, {\rm Id}_{E\otimes V^*},
\end{equation}
where $\alpha'$ is the homomorphism in \eqref{e-1}. Now consider
the following composition of homomorphisms:
$$
J^1(E)\,\, \hookrightarrow\,\, (E\otimes V^*)\oplus J^1(E) \,\,\longrightarrow\,\,
((E\otimes V^*)\oplus J^1(E))/\Delta(E\otimes K_X)
$$
$$
=\,\, J^1_V(E) \,\,\stackrel{\delta}{\longrightarrow}\,\, E\otimes V^*,
$$
where $(E\otimes V^*)\oplus J^1(E) \,\,\longrightarrow\,\,
((E\otimes V^*)\oplus J^1(E))/\Delta(E\otimes K_X)$ is the quotient map; let
$$
\widetilde{\delta}\,\,:\,\, J^1(E)\,\,\longrightarrow\,\,E\otimes V^*
$$
denote this composition of maps. Note that $\widetilde{\delta}$ is a holomorphic
differential operator of order one from $E$ to $E\otimes V^*$. From \eqref{ed2}
it follows that $\widetilde{\delta}$ satisfies \eqref{e-4}. In other words,
$\widetilde{\delta}$ is a Lie algebroid connection on $E$.

Therefore, the Lie algebroid connections on $E$ are precisely the holomorphic splittings
of the short exact sequence in \eqref{e-1}.

\begin{remark}\label{rem2}
In \cite{CV} and \cite{To}, a construction similar to the one in \eqref{e-1}
was done. Moreover, in \cite{To}, a similar criterion as above was established.
\end{remark}

We will reformulate the above description of the Lie algebroid connections on $E$.

Tensoring the exact sequence in \eqref{e-1} with $E^*$, we get the short exact sequence
\begin{equation}\label{f1}
0\, \longrightarrow\, E^*\otimes E\otimes V^*\,=\, 
\text{End}(E)\otimes V^* \, \stackrel{\alpha''}{\longrightarrow}\, \text{Hom}(E,\, J^1_V(E)) \,
\stackrel{\beta''}{\longrightarrow}\, \text{End}(E) \, \longrightarrow\, 0.
\end{equation}
Consider the trivial line subbundle ${\mathcal O}_X\otimes_{\mathbb C} {\rm Id}_E\,\subset\, \text{End}(E)$, and
define the subbundle
\begin{equation}\label{f-2}
{\mathcal A}_V(E)\,\,:=\,\, (\beta'')^{-1}({\mathcal O}_X\otimes_{\mathbb C} {\rm Id}_E)\,\,
\subset\, \, \text{Hom}(E,\, J^1_V(E)),
\end{equation}
where $\beta''$ is the homomorphism in \eqref{f1}. So from \eqref{f1} we have the short exact sequence
\begin{equation}\label{f2}
0\, \longrightarrow\,\text{End}(E)\otimes V^* \, \stackrel{\widehat\alpha}{\longrightarrow}\,
{\mathcal A}_V(E) \, \stackrel{\widehat\beta}{\longrightarrow}\, {\mathcal O}_X \, \longrightarrow\, 0,
\end{equation}
where $\widehat\alpha$ and $\widehat\beta$ are given by $\alpha''$ and $\beta''$ respectively.

\begin{lemma}\label{lem1}
The Lie algebroid connections on $E$ are precisely the holomorphic splittings
of the short exact sequence in \eqref{f2}.
\end{lemma}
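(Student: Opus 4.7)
The plan is to deduce this directly from the correspondence, established just before the lemma, between Lie algebroid connections on $E$ and holomorphic splittings of the short exact sequence \eqref{e-1}. It therefore suffices to exhibit a natural bijection between splittings of \eqref{e-1} and splittings of \eqref{f2}.

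First, I would rephrase splittings of \eqref{e-1} on the $\beta'$ side: a splitting is a holomorphic homomorphism $s\,:\, E\, \longrightarrow\, J^1_V(E)$ with $\beta'\circ s\,=\, {\rm Id}_E$. Equivalently, such an $s$ is a global section of $\text{Hom}(E,\, J^1_V(E))$ whose image under $\beta''$ from \eqref{f1} is the identity section ${\rm Id}_E\, \in\, H^0(X,\, \text{End}(E))$. Note that $\beta''$ is obtained simply by post-composing with $\beta'$, so this is immediate.

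Next, I would unpack what a splitting of \eqref{f2} means. A splitting is an ${\mathcal O}_X$--linear homomorphism $\sigma\,:\, {\mathcal O}_X\, \longrightarrow\, {\mathcal A}_V(E)$ with $\widehat{\beta}\circ\sigma\,=\, {\rm Id}_{{\mathcal O}_X}$. Giving such $\sigma$ is the same as giving a global section $\sigma(1)\, \in\, H^0(X,\, {\mathcal A}_V(E))$ with $\widehat{\beta}(\sigma(1))\,=\, 1\, \in\, H^0(X,\, {\mathcal O}_X)$. Since, by the definition in \eqref{f-2}, ${\mathcal A}_V(E)$ is the preimage of the trivial line subbundle ${\mathcal O}_X\cdot {\rm Id}_E\, \subset\, \text{End}(E)$ under $\beta''$, and since $\widehat{\beta}$ is precisely $\beta''$ restricted to ${\mathcal A}_V(E)$ (viewed as a map into ${\mathcal O}_X$ via the trivialization ${\mathcal O}_X\cdot {\rm Id}_E\,\cong\, {\mathcal O}_X$), the section $\sigma(1)$ is exactly a global section of $\text{Hom}(E,\, J^1_V(E))$ that maps to ${\rm Id}_E$ under $\beta''$.

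Comparing the two descriptions, the splittings of \eqref{e-1} and the splittings of \eqref{f2} correspond to the same global sections of $\text{Hom}(E,\, J^1_V(E))$. Combined with the earlier correspondence between Lie algebroid connections on $E$ and splittings of \eqref{e-1}, this proves the lemma. There is no serious obstacle; the only point that requires a moment of care is the identification of ${\mathcal O}_X\cdot {\rm Id}_E$ with ${\mathcal O}_X$, which makes the quotient ${\mathcal A}_V(E)/(\text{End}(E)\otimes V^*)$ in \eqref{f2} canonically isomorphic to ${\mathcal O}_X$ and identifies the splitting conditions.
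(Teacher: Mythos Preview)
Your proposal is correct and follows essentially the same approach as the paper: both reduce the lemma to the previously established correspondence with splittings of \eqref{e-1} and then identify splittings of \eqref{e-1} and of \eqref{f2} via the observation that each corresponds to a global section of $\text{Hom}(E,\, J^1_V(E))$ mapping to ${\rm Id}_E$ under $\beta''$. The paper writes out the two directions of the bijection explicitly, whereas you identify both sets of splittings with the same set of sections, but the content is the same.
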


\begin{proof}
We saw that the Lie algebroid connections on $E$ are precisely the holomorphic splittings
of the short exact sequence in \eqref{e-1}. There is a natural bijection between the
holomorphic splittings
of the short exact sequence in \eqref{e-1} and the holomorphic splittings
of the short exact sequence in \eqref{f2}. Indeed, for any holomorphic homomorphism
$$
\varphi\, :\, E\, \longrightarrow\, J^1_V(E)
$$
giving a holomorphic splitting of \eqref{e-1}, consider the homomorphism
$$
\varphi'\, :\, {\mathcal O}_X \, \longrightarrow\, \text{Hom}(E,\, J^1_V(E))
$$
that sends any locally defined holomorphic function $f$ on $X$ to the locally defined section
of $\text{Hom}(E,\, J^1_V(E))$ that maps any locally defined section $s$ of $E$ to $f\cdot \varphi(s)$.
Note that $$\varphi'({\mathcal O}_X)\, \subset\, {\mathcal A}_V(E)\, \subset\,\text{Hom}(E,\, J^1_V(E)),$$
because $(\beta''\circ\varphi')(f)\,=\, f\cdot \text{Id}_E$, for any locally defined holomorphic
function $f$ on $X$ (the map $\beta''$ is defined in \eqref{f1}). 
Moreover, $\varphi'$ gives a holomorphic splittings of the short exact sequence in \eqref{f2}.
Thus, from any holomorphic splitting of the short exact sequence in \eqref{e-1} we
have a holomorphic splitting of the short exact sequence in \eqref{f2}.

Conversely, for any homomorphism
$$
\psi\,:\, {\mathcal O}_X \, \longrightarrow\, {\mathcal A}_V(E)
$$
giving a holomorphic splitting of \eqref{f2}, consider the homomorphism
$$
\psi_1\, :\, E\, \longrightarrow\, J^1_V(E)
$$
defined by $e\, \longmapsto\, \psi_x(1_x)(e)\, \in\, J^1_V(E)_x$ for all
$x\, \in\, X$ and $e\, \in\, E_x$, where $1_x\,\in\, ({\mathcal O}_X)_x\,=\, {\mathbb C}$
is the image of $1\, \in\,{\mathbb C}$. It is straightforward to check that
$\psi_1$ gives a holomorphic splitting of \eqref{e-1}.
\end{proof}

\section{A criterion for Lie algebroid connections}

For a holomorphic vector bundle $E$ of positive rank on $X$, the number
$$\mu(E)\,:=\, \frac{\text{degree}(E)}{\text{rank}(E)}\,
\in\, {\mathbb Q}$$ is called the \textit{slope} of $E$. A
holomorphic vector bundle $E$ of positive rank on $X$ is called \textit{stable}
(respectively, \textit{semistable}) if $\mu(F) \, <\, \mu(E)$ (respectively, $\mu(F) \,
\leq\, \mu(E)$) for all holomorphic subbundles $F\, \subsetneq\, E$ of positive rank.
A semistable vector bundle $E$ is called \textit{polystable} if it is a direct sum of
stable vector bundles. (See \cite{HL}.)

Let $E$ and $E'$ be semistable vector bundles on $X$ with $\mu(E)\, >\, \mu(E')$. Then
it can be shown that
\begin{equation}\label{ez}
H^0(X,\, \text{Hom}(E,\, E'))\,=\, 0.
\end{equation}
To prove this, if $f\, :\, E\, \longrightarrow\, E'$ is a nonzero homomorphism over $X$, then we
have $\mu(f(E))\, \geq\, \mu(E)$ (as $E$ is semistable and $f(E)$ is a quotient of it),
and $\mu(f(E))\, \leq\, \mu(E')$ (as $E'$ is semistable and $f(E)$ is a subsheaf of it).
These two give $\mu(E)\, \leq\, \mu(E')$, which contradicts the given condition that
$\mu(E)\, >\, \mu(E')$.

We will consider Lie algebroids $(V,\, \phi)$ such that the underlying vector bundle $V$ is
stable. Any line bundle is stable, in particular, $TX$ is stable, so holomorphic connections
are special cases of such Lie algebroid connections.

\begin{proposition}\label{prop1}
Let $(V,\, \phi)$ be a Lie algebroid on $X$ such that
\begin{itemize}
\item ${\rm rank}(V) \, \geq\, 2$, and

\item the vector bundle $V$ is stable.
\end{itemize}
Then any holomorphic vector bundle $E$ on $X$ admits a Lie algebroid connection.
\end{proposition}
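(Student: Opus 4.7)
The plan is to reduce the problem to the vanishing of a single cohomology class and then exploit stability of $V$. If $\phi=0$, the zero homomorphism $E\to E\otimes V^*$ is a Lie algebroid connection by Remark~\ref{rem-i}, so I may assume $\phi\neq 0$. By Lemma~\ref{lem1}, producing a Lie algebroid connection on $E$ is equivalent to splitting the short exact sequence~\eqref{f2}, which in turn is equivalent to the vanishing of its extension class $\mathfrak{o}\in H^1(X,\,\text{End}(E)\otimes V^*)$.

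Next, observe that the construction of $J^1_V(E)$ and~\eqref{f2} in Section~\ref{HLC} is functorial in the Lie algebroid $(V,\phi)$: specializing to $(V,\phi)=(TX,\text{Id})$ recovers the usual Atiyah extension $0\to \text{End}(E)\otimes K_X\to \mathcal{A}(E)\to \mathcal{O}_X\to 0$, whose class is the classical Atiyah class $\text{at}(E)\in H^1(X,\text{End}(E)\otimes K_X)$. The anchor then fits into a commutative ladder identifying~\eqref{f2} with the pushout of this classical sequence along $\text{id}_{\text{End}(E)}\otimes \phi^*\colon \text{End}(E)\otimes K_X\to \text{End}(E)\otimes V^*$. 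Hence $\mathfrak{o}$ is the image of $\text{at}(E)$ under the induced map on $H^1$, and it remains to show this image vanishes.

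A key preliminary I would establish up front is $H^0(X,\,V\otimes K_X)=0$. Since $\phi\neq 0$, the dual map $\phi^*\colon K_X\to V^*$ is a sheaf injection; the saturation of its image is a line subbundle $L\subset V^*$ with $\deg L\geq 2g-2$. Stability of $V^*$, which has rank $\geq 2$, yields $\deg L<\mu(V^*)=-\mu(V)$, whence $\mu(V)<2-2g$ and therefore $\mu(V\otimes K_X)<0$. As $V\otimes K_X$ is stable of rank $\geq 2$ with negative slope, it has no nonzero global sections.

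To conclude, Serre duality identifies $H^1(X,\text{End}(E)\otimes V^*)$ with $H^0(X,\text{End}(E)\otimes V\otimes K_X)^*$. Under this identification, $\mathfrak{o}$ pairs with a section $s$ to give $\langle \text{at}(E),\,\widetilde s\rangle$, where $\widetilde s\in H^0(X,\text{End}(E))$ is the image of $s$ under $\text{id}_{\text{End}(E)}\otimes(\phi\otimes\text{id}_{K_X})\colon \text{End}(E)\otimes V\otimes K_X\longrightarrow \text{End}(E)$. The trace decomposition $\text{End}(E)=\mathcal{O}_X\cdot \text{Id}_E\oplus \text{End}_0(E)$ together with $H^0(X,V\otimes K_X)=0$ forces the identity component of $\widetilde s$ to vanish, so $\widetilde s$ lies in the trace-free summand. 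The hard part of the argument is then to show that this trace-free $\widetilde s$ still pairs trivially with $\text{at}(E)$; I expect to finish by a second application of Serre duality combined with the long exact sequence obtained by tensoring $0\to K_X\xrightarrow{\phi^*}V^*\to V^*/\phi^*(K_X)\to 0$ with $\text{End}_0(E)$, using the slope estimate $\mu(V)<2-2g$ to kill the residual cohomological contributions coming from stable summands of negative slope.
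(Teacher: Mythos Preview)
Your setup is correct and elegant: the obstruction $\mathfrak{o}$ is indeed the pushforward of the classical Atiyah class $\text{at}(E)$ along $\text{id}\otimes\phi^*$, the slope estimate $\mu(V)<2-2g$ follows from $\phi\neq 0$ and stability of $V$, and consequently $H^0(X,V\otimes K_X)=0$. The Serre-dual reformulation $\langle\mathfrak{o},s\rangle=\langle\text{at}(E),\widetilde{s}\rangle$ is also correct.

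The gap is in the final step. Trace-freeness of $\widetilde{s}\in H^0(X,\text{End}(E))$ does \emph{not} force $\langle\text{at}(E),\widetilde{s}\rangle=0$. For instance, take $E=L_1\oplus L_2$ with $\deg L_1\neq\deg L_2$; the endomorphism $\text{diag}(1,-1)$ is trace-free, yet its Atiyah pairing is a nonzero multiple of $\deg L_1-\deg L_2$. So the reduction you have carried out discards exactly the information needed to conclude. Your proposed completion via the long exact sequence of $0\to K_X\to V^*\to Q\to 0$ tensored with $\text{End}_0(E)$ does not help: the statement ``$\text{at}_0(E)$ lies in the kernel of $(\text{id}\otimes\phi^*)_*$'' is tautologically equivalent to what you are trying to prove, and the slope bound $\mu(V)<2-2g$ gives no control over $H^1(X,\text{End}_0(E)\otimes V^*)$ or over the image of the connecting map for an \emph{arbitrary} bundle $E$.

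What is actually needed is a structural statement about $s$ itself, not just its trace. The paper works with the Harder--Narasimhan filtration $0=F_0\subsetneq F_1\subsetneq\cdots\subsetneq F_\ell=E$ and uses the slope inequality $\mu(V\otimes K_X)<0$ to show that every $s\in H^0(X,\text{Hom}(E,E\otimes V\otimes K_X))$ satisfies $s(F_i)\subset F_{i-1}\otimes V\otimes K_X$ for all $i$; that is, $s$ is strictly filtration-decreasing, hence nilpotent in a strong sense. One then constructs a filtered refinement of the obstruction sequence and observes that the obstruction class lifts to the filtration-preserving subalgebra, so its pairing with any filtration-nilpotent $s$ vanishes by the elementary fact that $\text{trace}(v\circ w)=0$ when $v$ preserves a filtration and $w$ strictly decreases it. This is the genuine content of the non-semistable case, and your outline does not supply it.
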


\begin{proof}
Let $g$ denote the genus of $X$. First assume that
\begin{equation}\label{e2}
\mu(V)\,=\, \frac{{\rm degree}(V)}{{\rm rank}(V)}\,\, \geq\,\, 2(1-g) \,=\, \text{degree}(TX).
\end{equation}
Then it can be shown that $H^0(X,\, V^*\otimes TX)\,=\, H^0(X,\, {\rm Hom}(V,\, TX))\,=\, 0$.
Indeed, if $f\, :\, V\, \longrightarrow\, TX$ is a nonzero homomorphism, then from
\eqref{e2} it follows that $\mu(\text{kernel}(f))\, \geq\, \mu(V^*)$, because
$\text{degree}(V)\,=\, \text{degree}(\text{kernel}(f))+\text{degree}(TX)$. In that case,
the subsheaf $\text{kernel}(f)\, \subset\, V$ contradicts the stability condition
for $V$ (note that $\text{rank}(0\, <\, \text{kernel}(f))\, <\, \text{rank}(V)$).
In particular, we have $\phi\,=\, 0$.

When $\phi\,=\, 0$, a Lie algebroid connection on $E$ is an element of
$H^0(X,\, \text{End}(E)\otimes V^*)$ (a generalized Higgs field on $E$; a pair consisting of
$E$ and an element of $H^0(X,\, \text{End}(E)\otimes V^*)$ is also known as a Hitchin pair).
Any $E$ admits such a generalized Higgs field; for example, $H^0(X,\, \text{End}(E)\otimes V^*)$
has the zero element. Thus, $E$ admits a Lie algebroid connection.

Now assume that
\begin{equation}\label{e3}
\frac{{\rm degree}(V)}{{\rm rank}(V)}\,\, < \,\, 2(1-g) \,=\, \text{degree}(TX).
\end{equation}

Recall from Lemma \ref{lem1} that the Lie algebroid connections on $E$ are precisely the 
holomorphic splittings of the short exact sequence in \eqref{f2}. Let
\begin{equation}\label{e4}
\tau\,\,\in\,\, H^1(X,\, \text{End}(E)\otimes V^*)\,\,=\,\, H^1(X,\, \text{Hom}(E,\, E\otimes V^*)) 
\end{equation}
be the obstruction class for the holomorphic splitting of \eqref{f2}. By Serre duality,
\begin{equation}\label{e5}
\tau\,\,\in\,\, H^0(X,\, \text{End}(E)\otimes V\otimes K_X)^*\,=\,
H^0(X,\, \text{Hom}(E,\, E\otimes V\otimes K_X))^*.
\end{equation}

First consider the case where the vector bundle $E$ is semistable.

A holomorphic vector bundle $E$ of rank $r$ on $X$ is polystable if and only if the projective
bundle ${\mathbb P}(E)$ is given by a homomorphism from the fundamental group of $X$ to
$\text{PU}(r)$ \cite{NS}. From this characterization of polystable bundles it follows immediately
that the tensor product $E\otimes E'$ of two polystable vector bundles
$E$ and $E'$ is again polystable.
Indeed, if ${\mathbb P}(E)$ is given by $\rho\, :\, \pi_1(X,\, x_0)\, \longrightarrow\,
\text{PU}(r)$ and ${\mathbb P}(E')$ is given by $\rho'\, :\, \pi_1(X,\, x_0)\, \longrightarrow\,
\text{PU}(r')$, then ${\mathbb P}(E\otimes E')$ is given by $\rho\otimes\rho'\, :\,
\pi_1(X,\, x_0)\, \longrightarrow\, \text{PU}(rr')$.

A holomorphic vector bundle $E$ on $X$ is semistable if and only if $E$ admits a filtration
of holomorphic subbundles such that each successive quotient is polystable of same slope. In
view of this, from the above observation that the tensor product
of two polystable vector bundles is again polystable it follows that
the tensor product of two semistable vector bundles is again semistable. It may be
mentioned that the tensor product of two stable vector bundles is not stable in general.

Since $V$ is stable and $E$ is semistable, we conclude that $E\otimes V\otimes K_X$ is semistable. Now from
\eqref{e3} it follows that $\text{degree}(V\otimes K_X)\, <\, 0$. Hence we have
$$
\frac{\text{degree}(E\otimes V\otimes K_X)}{\text{rank}(E\otimes V\otimes K_X)}\,=\,
\frac{\text{degree}(E)}{\text{rank}(E)} +\frac{\text{degree}(V\otimes K_X)}{\text{rank}(V\otimes K_X)}\,
< \, \frac{\text{degree}(E)}{\text{rank}(E)}.
$$
This implies that $H^0(X,\, \text{Hom}(E,\, E\otimes V\otimes K_X))\,=\, 0$ (see \eqref{ez});
recall
that both $E$ and $E\otimes V\otimes K_X$ are semistable. Hence $\tau$ in \eqref{e5} satisfies
the equation $\tau\,=\, 0$. Consequently, the short exact sequence in \eqref{f2} splits
holomorphically. Now from Lemma \ref{lem1} it follows that $E$ admits a Lie algebroid connection.

Next consider the case where the vector bundle $E$ is \textit{ not } semistable. Let
\begin{equation}\label{e6}
0\,=\, F_0\, \subsetneq\, F_1\, \subsetneq\, F_2 \, \subsetneq \, \cdots \, \subsetneq \,
F_{\ell-1} \, \subsetneq \, F_\ell\,=\, E
\end{equation}
be the Harder--Narasimhan filtration of $E$ (see \cite[Section 1.3]{HL}). We have $\ell\, \geq\,
2$ because $E$ is not semistable. We recall that $F_1$ is the unique maximal semistable subbundle
of $E$ of maximal slope. For $2\, \leq\, j\, \leq\, \ell-1$, the quotient bundle $F_j/F_{j-1}$
is the unique maximal semistable subbundle of $E/F_{j-1}$ of maximal slope.

Take any $\theta\, \in\, H^0(X,\, \text{Hom}(E,\, E\otimes V\otimes K_X))$. We will show that
\begin{equation}\label{e7}
\theta(F_i)\,\, \subset\, F_{i-1}\otimes V\otimes K_X
\end{equation}
for all $1\, \leq\, i\, \leq\, \ell$ (see \eqref{e6}).

Since $V$ is stable, and $F_j/F_{j-1}$ is semistable for all $1\, \leq\, j\, \leq\, \ell$, we know
that the vector bundle $(F_j/F_{j-1})\otimes V\otimes K_X$ is semistable. Next note that
$$
\frac{\text{degree}((F_j/F_{j-1})\otimes V\otimes K_X)}{\text{rank}((F_j/F_{j-1})\otimes V\otimes K_X)}
\,=\,\frac{\text{degree}(F_j/F_{j-1})}{\text{rank}(F_j/F_{j-1})} +
\frac{\text{degree}(V\otimes K_X)}{\text{rank}(V\otimes K_X)}\, <\,
\frac{\text{degree}(F_j/F_{j-1})}{\text{rank}(F_j/F_{j-1})}
$$
(the above inequality is a consequence of \eqref{e3}). Hence we have
\begin{equation}\label{e8}
\frac{\text{degree}((F_j/F_{j-1})\otimes V\otimes K_X)}{\text{rank}((F_j/F_{j-1})\otimes V\otimes K_X)}
\, <\, \frac{\text{degree}(F_j/F_{j-1})}{\text{rank}(F_j/F_{j-1})} \, \leq \,
\frac{\text{degree}(F_k/F_{k-1})}{\text{rank}(F_k/F_{k-1})}
\end{equation}
for all $k\, \leq\, j$ and all $1\, \leq\, j\, \leq\, \ell$; the second
inequality is a part of the properties of a Harder--Narasimhan filtration. From \eqref{e8}
it follows that
\begin{equation}\label{e8a}
H^0(X,\, \text{Hom}(F_k/F_{k-1},\, (F_j/F_{j-1})\otimes V\otimes K_X))\,=\, 0
\end{equation}
for all $k\, \leq\, j$ and all $1\, \leq\, j\, \leq\, \ell$ (see
\eqref{ez}).

We will deduce \eqref{e7} from \eqref{e8a} using an inductive argument.

First consider $\theta(F_1)$. From \eqref{e8a} we know that the following
composition of homomorphisms vanishes identically:
$$
F_1 \, \stackrel{\theta}{\longrightarrow}\, E\otimes V\otimes K_X\, \xrightarrow{\,\,\, q_\ell\otimes
{\rm Id}_{V\otimes K_X}\,\,\,}\, (E/F_{\ell-1})\otimes V\otimes K_X,
$$
where $q_\ell\, :\, E\, \longrightarrow\, E/E_{\ell-1}$ is the quotient map. Consequently,
we have $\theta(F_1)\, \subset\, F_{\ell-1}\otimes V\otimes K_X$. Assume that
\begin{equation}\label{k1}
\theta(F_1)\, \subset\, F_j\otimes V\otimes K_X
\end{equation}
with $j\, \geq\, 1$. We will show that
\begin{equation}\label{k2}
\theta(F_1)\, \subset\, F_{j-1}\otimes V\otimes K_X.
\end{equation}

In view of \eqref{k1}, we have the following composition of homomorphisms:
$$
F_1 \, \stackrel{\theta}{\longrightarrow}\, F_j\otimes V\otimes K_X\, \xrightarrow{\,\,\, q_j\otimes
{\rm Id}_{V\otimes K_X}\,\,\,}\, (F_j/F_{j-1})\otimes V\otimes K_X,
$$
where $q_j\, :\, F_j\, \longrightarrow\, F_j/E_{j-1}$ is the quotient map. From
\eqref{e8a} we know that this composition of homomorphisms vanishes
identically. This proves \eqref{k2}. Now inductively we conclude that that
\begin{equation}\label{k2a}
\theta(F_1)\,\,=\,\, 0.
\end{equation}

Next assume that we have a $1\, \leq\, k\, \leq\, \ell-1$ such that
\begin{equation}\label{k3}
\theta(F_i)\, \subset\, F_{i-1}\otimes V\otimes K_X
\end{equation}
for all $i\, \leq\, k$. We will show that
\begin{equation}\label{k4}
\theta(F_{k+1})\, \subset\, F_{k}\otimes V\otimes K_X.
\end{equation}
In \eqref{k1}, \eqref{k2}, replace $E$ by $E/F_k$ and replace the filtration in
\eqref{e6} by the filtration
$$
0\,=\, F_k/F_k\, \subsetneq\, F_{k+1}/F_k\, \subsetneq\, F_{k+2}/F_k
\, \subsetneq \, \cdots \, \subsetneq \, F_{\ell-1}/F_k \, \subsetneq \, F_\ell/F_k\,=\, E/F_k.
$$
{}From \eqref{k3} we know that $\theta$ induces a homomorphism
$$\widehat{\theta}\, :\, E/F_k\, \longrightarrow\, (E/F_k)\otimes V\otimes K_X.$$ 
Now from \eqref{k2a} it follows that $\widehat{\theta}(F_{k+1}/F_k)\,=\, 0$. This implies
that \eqref{k4} holds.

Therefore, inductively it is deduced that \eqref{e7} holds.

Consider the filtration of $E$ in \eqref{e6}. It produces a filtration of holomorphic subbundles
$$
0\,=\, J^1(F_0)\, \subsetneq\, J^1(F_1)\, \subsetneq\, J^1(F_2) \, \subsetneq \, \cdots \, \subsetneq \,
J^1(F_{\ell-1}) \, \subsetneq \, J^1(F_\ell)\,=\, J^1(E),
$$
which, in turn, produces a filtration of holomorphic subbundles
\begin{equation}\label{e9}
0\,=\, J^1_V(F_0)\, \subsetneq\, J^1_V(F_1)\, \subsetneq\, J^1_V(F_2) \, \subsetneq \, \cdots \, \subsetneq \,
J^1_V(F_{\ell-1}) \, \subsetneq \, J^1_V(F_\ell)\,=\, J^1_V(E),
\end{equation}
where $J^1_V(F_i)$ is constructed as in \eqref{e10} by substituting $F_i$ in place of $E$ in \eqref{e10}.

Let
$$\text{End}^F(E)\,\, \subset\,\, \text{End}(E)$$
be the holomorphic subbundle defined by the sheaf of endomorphisms preserving the filtration
in \eqref{e6}. So a section $\gamma\, \in\, \Gamma(U,\, \text{End}(E))$ is a section
of $\text{End}^F(E)$ over $U$ if and only if $\gamma(F_i\big\vert_U)\, \subset\, F_i\big\vert_U$ for
all $0\, \leq\, i\, \leq\, \ell$. Let
$$
{\mathcal A}^F_V(E)\,\, \subset\,\, {\mathcal A}_V(E)
$$
be the holomorphic subbundle (see \eqref{f-2}) defined as follows: A section $\gamma\, \in\,
\Gamma(U,\, {\mathcal A}_V(E))$ is a section of ${\mathcal A}^F_V(E)$ if the homomorphism
$$\gamma\,:\, E\big\vert_U\, \longrightarrow\, J^1_V(E)\big\vert_U$$
(see \eqref{f-2}) sends the subbundle $F_i\big\vert_U\,\subset\, E\big\vert_U$ to the subbundle
$J^1_V(F_i)\big\vert_U\,\subset\, J^1_V(E)\big\vert_U$ in \eqref{e9} for all $1\, \leq\, i\, \leq\,
\ell$. Now from \eqref{f2} we have the following commutative diagram
\begin{equation}\label{e11}
\begin{matrix}
0 & \longrightarrow & \text{End}^F(E)\otimes V^* & \stackrel{\widetilde\alpha}{\longrightarrow} &
{\mathcal A}^F_V(E) & \stackrel{\widetilde\beta}{\longrightarrow} & {\mathcal O}_X & \longrightarrow & 0\\
&&\,\,\,\,\Big\downarrow\Phi &&\Big\downarrow &&\Big\Vert\\
0 & \longrightarrow & \text{End}(E)\otimes V^* & \stackrel{\widehat\alpha}{\longrightarrow} &
{\mathcal A}_V(E) & \stackrel{\widehat\beta}{\longrightarrow} & {\mathcal O}_X & \longrightarrow & 0,
\end{matrix}
\end{equation}
whose rows are exact, the vertical maps are injective (they are the natural
inclusion maps) and $\widetilde\alpha$ (respectively,
$\widetilde\beta$) is the restriction of $\widehat\alpha$ (respectively, $\widehat\beta$).

Let
$$
\Phi_*\,\,:\,\, H^1(X,\, \text{End}^F(E)\otimes V^*)\,\,\longrightarrow\,\, H^1(X,\, \text{End}(E)\otimes V^*)
$$
be the homomorphism of cohomologies
induced by $\Phi$ in \eqref{e11}. Let $$\tau_0\,\,\in\,\, H^1(X,\, \text{End}^F(E)\otimes V^*)$$
be the obstruction class for the holomorphic splitting of the top exact sequence in \eqref{e11}. From
\eqref{e11} it follows that 
\begin{equation}\label{e12}
\Phi_*(\tau_0)\,\,=\,\, \tau,
\end{equation}
where $\tau$ is the cohomology class in \eqref{e4}. To see \eqref{e12}, recall that $\tau$ (respectively,
$\tau_0$) is the image of $1\, \in\, H^0(X,\, {\mathcal O}_X)$ under the homomorphism $\varpi_1\,
:\, H^0(X,\, {\mathcal O}_X)\, \longrightarrow\, H^1(X,\, \text{End}(E)\otimes V^*)$ (respectively,
$\varpi_2\, :\, H^0(X,\, {\mathcal O}_X)\, \longrightarrow\, H^1(X,\, \text{End}^F(E)\otimes V^*)$)
in the long exact sequence of cohomologies associated to the bottom (respectively, top) exact sequence
in \eqref{e11}. Now \eqref{e12} is a consequence of the commutativity of the following diagram:
$$
\begin{matrix}
H^0(X,\, {\mathcal O}_X) & \xrightarrow{\,\,\,\varpi_2\,\,\,} & H^1(X,\, \text{End}^F(E)\otimes V^*)\\
\,\,\, \Big\downarrow{\rm Id} && \,\,\, \Big\downarrow \Phi_*\\
H^0(X,\, {\mathcal O}_X) & \xrightarrow{\,\,\,\varpi_1\,\,\,} & H^1(X,\, \text{End}(E)\otimes V^*)
\end{matrix}
$$
where $\varpi_1$ and $\varpi_2$ are above maps.

Let $tr\, :\, \text{End}(E)\otimes \text{End}(E)\, \longrightarrow\, {\mathcal O}_X$ be the pairing
given by the trace map. So for $v,\, w\, \in\, \Gamma(U,\, \text{End}(E)\big\vert_U)$, we have
$$
tr(v\otimes w) \,=\, \text{trace}(v\circ w)\, \in\, \Gamma(U,\, {\mathcal O}_U).
$$
Now, if $v$ preserves the filtration in \eqref{e6} (meaning $v(s)\, \subset\, \Gamma(U,\, F_i)$
for all $s\, \in\, \Gamma(U,\, F_i)$ and all $1\, \leq\, i\, \leq\, \ell$) and $w$ is
nilpotent with respect to the filtration in \eqref{e6} (meaning $v(s)\, \subset\, \Gamma(U,\, F_{i-1})$
for all $s\, \in\, \Gamma(U,\, F_i)$ and all $1\, \leq\, i\, \leq\, \ell$), then
$v\circ w$ is also nilpotent with respect to the filtration in \eqref{e6}. In that case, we have
\begin{equation}\label{e13}
tr(v\otimes w) \,=\, 0.
\end{equation}

In view of \eqref{e7} and \eqref{e12}, using \eqref{e13} we conclude that $\tau$ in \eqref{e5}
satisfies the equation $\tau\,=\, 0$. Therefore, $\tau$ in \eqref{e4} vanishes.
This implies that the short exact sequence in \eqref{f2} splits holomorphically.
Now from Lemma \ref{lem1} it follows that $E$ admits a Lie algebroid connection.
\end{proof}

\begin{remark}\label{rem1}
We note that Proposition \ref{prop1} is proved in \cite{AO} under the assumption that
the vector bundle $E$ is semistable; see \cite[Corollary 3.14]{AO}.
\end{remark}

\begin{lemma}\label{lem2}
Let $(V,\, \phi)$ be a Lie algebroid on $X$ such that
${\rm rank}(V)\,=\, 1$. Assume that one of the following two holds:
\begin{enumerate}
\item $V\,\not=\, TX$;

\item if $V\,=\, TX$, then $\phi\,=\, 0$.
\end{enumerate}
Then any holomorphic vector bundle $E$ on $X$ admits a Lie algebroid connection.
\end{lemma}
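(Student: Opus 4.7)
I would split into two cases according to the anchor map $\phi$.

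\emph{Case 1 ($\phi = 0$).} The defining condition \eqref{e-4} becomes $\mathcal{O}_X$-linearity, so by Remark \ref{rem-i} the zero homomorphism $E \to E \otimes V^*$ is itself a Lie algebroid connection. Nothing more is needed.

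\emph{Case 2 ($\phi \neq 0$).} The hypothesis of the lemma then forces $V \neq TX$. Both $V$ and $TX$ are line bundles, and $\phi: V \to TX$ is a nonzero morphism between them. Such a morphism is injective (its zero locus is a proper analytic subset of $X$), and its cokernel is a nonzero torsion sheaf because $V \not\cong TX$ rules out $\phi$ being an isomorphism. This forces
$$\deg V \,<\, \deg TX \,=\, 2-2g, \qquad \text{so} \qquad \deg(V \otimes K_X) \,<\, 0.$$

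With this inequality in hand, I would run the obstruction argument of Proposition \ref{prop1} verbatim. By Lemma \ref{lem1}, a Lie algebroid connection on $E$ exists iff the short exact sequence \eqref{f2} splits, equivalently the obstruction class $\tau \in H^1(X, \text{End}(E) \otimes V^*)$ vanishes; by Serre duality $\tau$ pairs with $H^0(X, \text{End}(E) \otimes V \otimes K_X)$. If $E$ is semistable, then $\text{End}(E) \otimes V \otimes K_X$ is semistable of strictly negative slope, its $H^0$ vanishes, and $\tau = 0$. If $E$ is not semistable, I would apply the Harder--Narasimhan argument of Proposition \ref{prop1}: inductive slope comparison (using only $\deg(V \otimes K_X) < 0$ and stability of $V$) shows that any $\theta \in H^0(X, \text{End}(E) \otimes V \otimes K_X)$ satisfies $\theta(F_i) \subset F_{i-1} \otimes V \otimes K_X$, and then the trace-vanishing identity \eqref{e13} together with the factorization \eqref{e12} of $\tau$ through the filtration-preserving endomorphism bundle forces $\tau = 0$.

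There is no essential obstacle. The proof of Proposition \ref{prop1} only uses that $V$ is stable (automatic for a line bundle) and that $\deg(V \otimes K_X) < 0$ (established above). The reason rank $1$ needs a separate statement is that the ``high-slope'' sub-case of Proposition \ref{prop1}'s proof deduces $\phi = 0$ from stability of $V$ together with $\mu(V) \geq \deg TX$, an inference that fails in rank $1$ (a line bundle is trivially stable yet may admit a nonzero morphism to $TX$); Remark \ref{rem-i} plugs this gap by directly providing the zero connection whenever $\phi = 0$.
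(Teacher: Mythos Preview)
Your proposal is correct, but the paper takes a different route in the nonzero-anchor case. You reuse the Harder--Narasimhan argument of Proposition~\ref{prop1} verbatim, relying only on the stability of $V$ (trivial for a line bundle) and the inequality $\deg(V\otimes K_X)<0$; this is efficient and works without change. The paper instead exploits a feature specific to rank~$1$: since $\phi\in H^0(X,V^*\otimes TX)$ is a nonzero section of a line bundle, its divisor $\mathbb{D}=\operatorname{Div}(\phi)$ is effective and nonzero, and one has $V\otimes K_X\cong\mathcal{O}_X(-\mathbb{D})$. Thus every $\theta\in H^0(X,\operatorname{End}(E)\otimes V\otimes K_X)$ may be regarded as an endomorphism of $E$ vanishing on $\mathbb{D}$; then $\operatorname{tr}(\theta^j)$ vanishes on $\mathbb{D}$ for every $j$, hence everywhere on $X$, so $\theta$ is nilpotent. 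The paper then uses the kernel filtration $W_k=\ker(\theta^k)$ (which depends on $\theta$) in place of the Harder--Narasimhan filtration, and concludes $\tau(\theta)=0$ from the same trace identity~\eqref{e13}. Your approach has the advantage of making Lemma~\ref{lem2} a transparent corollary of Proposition~\ref{prop1}; the paper's approach avoids invoking semistability of tensor products and the Harder--Narasimhan machinery, and gives a more self-contained reason (via the divisor of the anchor) for why the Serre-dual space consists entirely of nilpotents.
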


\begin{proof}
Let $g$ be the genus of $X$. First assume that
$$
\text{degree}(V)\, \, \geq\,\, \text{degree}(TX)\,\,=\,\, 2(1-g).
$$
If $\text{degree}(V)\, > \, \text{degree}(TX)$, then $H^0(X,\, \text{Hom}(V,\, TX))\,=\, 0$,
and if $\text{degree}(V)\,= \, \text{degree}(TX)$, then any homomorphism
$V\, \longrightarrow\, TX$ is either zero or an isomorphism.
Thus the given condition in the lemma that $\phi\,=\, 0$ if $V\,=\, TX$ implies
that $\phi\,=\, 0$. As mentioned in the proof of Proposition \ref{prop1}, when $\phi\,=\, 0$,
A Lie algebroid connection on $E$ is simply an element of $H^0(X,\, \text{End}(E)\otimes V^*)$.
Since $H^0(X,\, \text{End}(E)\otimes V^*)$ is nonempty, for example, it has the zero element,
we conclude that $E$ admits a Lie algebroid connection.

Next assume that
\begin{equation}\label{e14}
\text{degree}(V)\, \, < \,\, \text{degree}(TX).
\end{equation}
If $\phi\,=\, 0$, then again the zero homomorphism
$E\,\,\longrightarrow\, \, E\otimes V^*$ is a Lie algebroid connection on $E$. So we
assume that the homomorphism $\phi$ is nonzero.

Let
\begin{equation}\label{e15}
{\mathbb D}\,\,=\,\, {\rm Div}(\phi)
\end{equation}
be the divisor of the section $\phi\, \in\, H^0(X,\, V^*\otimes TX)$. We have
${\rm degree}({\mathbb D})\,=\, 2(1-g)-\text{degree}(V) \, >\, 0$.

Let
$$
\tau\,\,\in\,\, H^1(X,\, \text{End}(E)\otimes V^*)
$$
be the obstruction class for the holomorphic splitting of \eqref{f2}. By Serre duality,
$$
\tau\,\,\in\,\, H^0(X,\, \text{End}(E)\otimes {\mathcal O}_X(-{\mathbb D}))^*,
$$
where $\mathbb D$ is the effective divisor in \eqref{e15}.

Take any nonzero section
$$
\theta\, \in\, H^0(X,\, \text{End}(E)\otimes {\mathcal O}_X(-{\mathbb D}))
\setminus \{0\}\, \subset\, H^0(X,\, \text{End}(E)) \setminus \{0\}.
$$
For any $j\, \geq\, 1$, the section $\theta^j\, \in\, H^0(X,\, \text{End}(E))$ vanishes on
$\mathbb D$, and hence ${\rm trace}(\theta^j)\,=\, 0$ on $\mathbb D$. Since any holomorphic
function on $X$ is a constant one, this implies that ${\rm trace}(\theta^j)\,=\, 0$ on $X$
(recall that ${\mathbb D}\, \not=\, 0$). Consequently, $\theta$ is a nilpotent endomorphism of $E$.

For $1\, \leq\, k\, \, < \, {\rm rank}(E)$, consider the coherent analytic subsheaf
$$
\text{kernel}(\theta^k)\,\, \subset\,\, E.
$$
Note that the quotient $E/\text{kernel}(\theta^k)$ is a subsheaf of $E$ by the homomorphism
$\theta^k$. This implies that $E/\text{kernel}(\theta^k)$ is torsionfree. As a consequence of
it, $\text{kernel}(\theta^k)$ is actually a subbundle of $E$. This subbundle $\text{kernel}(\theta^k)$
of $E$ will be denoted by $W_k$, for notational convenience.

So we have filtration of holomorphic subbundles of $E$
\begin{equation}\label{e16}
0\,=:\, W_0\, \subset\, W_1\, \subset\, W_2 \, \subset\, \cdots \, \subset\, W_{r-1}\, \subset\, W_r\,:=\, E,
\end{equation}
where $r\,=\, {\rm rank}(E)$. The homomorphism $\theta$ is evidently nilpotent with respect to the
filtration in \eqref{e16}.

The filtration in \eqref{e16} produces a filtration of
holomorphic subbundles
\begin{equation}\label{e16b}
J^1_V(W_1)\, \subset\, J^1_V(W_2) \, \subset\, \cdots \, \subset\, J^1_V(W_{r-1})\, \subset\,
J^1_V(W_r)\,:=\, J^1_V(E)
\end{equation}
(see \eqref{e10} for the construction of $J^1_V(W_j)$). Let
$$\text{End}^F(E) \,\, \subset\,\, \text{End}(E)$$
be the holomorphic subbundle defined by the sheaf of
endomorphisms of $E$ preserving the filtration in \eqref{e16}. Let
$$
{\mathcal A}^F_V(E)\,\, \subset\,\, {\mathcal A}_V(E)
$$
be the holomorphic subbundle (see \eqref{f-2}) defined as follows: A section $\gamma\, \in\,
\Gamma(U,\, {\mathcal A}_V(E))$ is a section of ${\mathcal A}^F_V(E)\big\vert_U$ if the homomorphism
$$\gamma\,:\, E\big\vert_U\, \longrightarrow\, J^1_V(E)\big\vert_U$$
(see \eqref{f-2}) sends the subbundle $W_i\big\vert_U\,\subset\, E\big\vert_U$ in \eqref{e16}
to the subbundle $J^1_V(W_i)\big\vert_U\,\subset\, J^1_V(E)\big\vert_U$
in \eqref{e16b} for all $1\, \leq\, i\, \leq\, r$. Now from
\eqref{f2} we have the following commutative diagram
\begin{equation}\label{e17}
\begin{matrix}
0 & \longrightarrow & \text{End}^F(E)\otimes V^* & \longrightarrow &
{\mathcal A}^F_V(E) & \longrightarrow & {\mathcal O}_X & \longrightarrow & 0\\
&&\,\,\,\,\Big\downarrow\Psi &&\Big\downarrow &&\Big\Vert\\
0 & \longrightarrow & \text{End}(E)\otimes V^* & \stackrel{\widehat\alpha}{\longrightarrow} &
{\mathcal A}_V(E) & \stackrel{\widehat\beta}{\longrightarrow} & {\mathcal O}_X & \longrightarrow & 0.
\end{matrix}
\end{equation}
Let
$$
\Psi_*\,\,:\,\, H^1(X,\, \text{End}^F(E)\otimes V^*)\,\,\longrightarrow\,\, H^1(X,\, \text{End}(E)\otimes V^*)
$$
be the homomorphism of cohomologies
induced by $\Psi$ in \eqref{e17}. Let $$\tau_0\,\,\in\,\, H^1(X,\, \text{End}^F(E)\otimes V^*)$$
be the obstruction class for the holomorphic splitting of the top exact sequence in \eqref{e17}. From
\eqref{e17} it follows immediately that 
\begin{equation}\label{e18}
\Psi_*(\tau_0)\,\,=\,\, \tau,
\end{equation}
where $\tau$ is the cohomology class in \eqref{e4} (its proof is same as that of \eqref{e12}).

Since $\theta$ is nilpotent with respect to the filtration in \eqref{e16}, from \eqref{e18} it
follows --- using \eqref{e13} --- that $\tau(\theta)\,=\, 0$. Hence we have $\tau\,=\, 0$. Now
using Lemma \ref{lem1} we conclude that $E$ admits a Lie algebroid connection.
\end{proof}

\begin{theorem}\label{thm1}
Let $(V,\, \phi)$ be a Lie algebroid on $X$ such that the vector bundle $V$ is stable.
\begin{enumerate}
\item If $\phi$ is not an isomorphism, then any holomorphic vector bundle $E$ on $X$ admits
a Lie algebroid connection.

\item If $\phi$ is an isomorphism, then a holomorphic vector bundle $E$ on $X$ admits
a Lie algebroid connection if and only if the degree of each indecomposable component of $E$ is zero.
\end{enumerate}
\end{theorem}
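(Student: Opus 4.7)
The plan is to deduce Theorem \ref{thm1} by combining the two previously established results, Proposition \ref{prop1} and Lemma \ref{lem2}, with the classical Atiyah--Weil criterion, and to organize the argument by splitting on the rank of $V$ and on whether the anchor map $\phi$ is an isomorphism.

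For part (1), I would first observe that if $\text{rank}(V) \geq 2$, then Proposition \ref{prop1} applies directly (the stability of $V$ is hypothesized, and nothing about $\phi$ was used there), and gives a Lie algebroid connection on any $E$. It therefore remains to handle $\text{rank}(V) = 1$. Here $V$ and $TX$ are both line bundles on $X$, so a nonzero homomorphism $V \to TX$ is either an isomorphism (precisely when $V \cong TX$ and the map is a nonzero scalar multiple of the identification) or has a nonempty zero divisor (when $V \not\cong TX$). Consequently, the hypothesis that $\phi$ is not an isomorphism is equivalent to requiring that either $V \not= TX$, or $V = TX$ and $\phi = 0$. This is exactly the hypothesis of Lemma \ref{lem2}, which then supplies the desired Lie algebroid connection. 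Combining the two subcases yields part (1).

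For part (2), the assumption that $\phi\colon V \to TX$ is an isomorphism immediately forces $\text{rank}(V) = \text{rank}(TX) = 1$ and $V \cong TX$. Under the identification of $V$ with $TX$ via $\phi$, the defining identity $D(fs) = fD(s) + s \otimes \phi^*(df)$ of a Lie algebroid connection reduces to the defining Leibniz rule of an ordinary holomorphic connection on $E$, and conversely every holomorphic connection on $E$ yields a Lie algebroid connection for $(V, \phi)$ via $\phi^*$. Thus the existence of a Lie algebroid connection on $E$ is equivalent to the existence of a holomorphic connection on $E$, and the Atiyah--Weil criterion (cited in the introduction as \cite{At}, \cite{We}) then states that this happens if and only if every indecomposable component of $E$ has degree zero.

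The theorem is therefore essentially a bookkeeping combination of the deeper earlier results with Atiyah--Weil, and the main conceptual point is simply the observation that in rank one the failure of $\phi$ to be an isomorphism is automatically of the type covered by Lemma \ref{lem2}. I do not anticipate any serious obstacle; the only mild care needed is in the rank one identification showing that Lie algebroid connections for $(TX, \text{Id}_{TX})$ coincide with ordinary holomorphic connections, so that Atiyah--Weil transfers cleanly to the Lie algebroid setting in part (2).
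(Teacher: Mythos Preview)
Your proposal is correct and follows essentially the same approach as the paper: split on $\mathrm{rank}(V)$, invoke Proposition~\ref{prop1} for rank $\geq 2$ and Lemma~\ref{lem2} for rank $1$, and reduce part~(2) to the Atiyah--Weil criterion via the identification of Lie algebroid connections for $(TX,\mathrm{Id}_{TX})$ with ordinary holomorphic connections. Your extra sentence verifying that, in rank one, ``$\phi$ not an isomorphism'' matches the hypothesis of Lemma~\ref{lem2} is a detail the paper leaves implicit.
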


\begin{proof}
Assume that $\phi$ is not an isomorphism. If ${\rm rank}(V)\, =\, 1$, then from
Lemma \ref{lem2} we know that any holomorphic vector bundle $E$ on $X$ admits
a Lie algebroid connection. If ${\rm rank}(V)\, \geq\, 2$, then Proposition \ref{prop1}
says that any holomorphic vector bundle $E$ on $X$ admits a Lie algebroid connection.

Next assume that $\phi$ is an isomorphism. In this case a Lie algebroid connection on $E$ is
a usual holomorphic connection on $E$ \cite{At}. Then a theorem of Weil and Atiyah says that
a holomorphic vector bundle $E$ on $X$ admits
a holomorphic connection if and only if the degree of each indecomposable component of $E$ is zero
(see \cite[p.~203, Theorem 10]{At}).
\end{proof}

\section*{Acknowledgements}

We are very grateful to the two referees for detailed comments to improve the exposition.
The first-named author is partially supported by J. C. Bose Fellowship (JBR/2023/000003).
The third named author is partially supported by SERB SRG Grant SRG/2023/001006.

\section*{Mandatory declarations}

There is no conflict of interests regarding this manuscript. No funding was received for it.
All authors contributed equally. No data were generated or used.


\end{document}